\newtheorem{Theorem}{\bf Theorem}
\newtheorem{lemma}[Theorem]{\bf Lemma}
\newtheorem{proposition}[Theorem]{\bf Proposition}
\newtheorem{definition}[Theorem]{\bf Definition}
\newtheorem{remark}[Theorem]{\bf Remark}
\newtheorem{theorem}[Theorem]{\bf Theorem}
\def\scfig #1 #2 {\resizebox{#2}{!}{\includegraphics{#1}}}
\newcommand{\be}{\begin{equation}}
\newcommand{\ee}{\end{equation}}
\def\hpic #1 #2 {\mbox{$\begin{array}[c]{l} 
\epsfig{file=#1,height=#2}\end{array}$}}
\def\wpic #1 #2 {\mbox{$\begin{array}[c]{l} 
\epsfig{file=#1,width=#2}\end{array}$}}
\begin{document}
\title{Intermediate planar algebra revisited, II}
\author{Keshab Chandra Bakshi}
\author{Sruthymurali}
\address{Chennai Mathematical Institute, Chennai, India,}
\email{bakshi209@gmail.com, kcbakshi@cmi.ac.in, sruthy92smk@gmail.com, sruthy@cmi.ac.in}
\date{04 October, 2021}
\subjclass[2010]{46L37}
\begin{abstract} 
	We describe the subfactor planar algebra of an intermediate subfactor $N\subset Q \subset M$ of an extremal subfactor $N\subset M$ of finite Jones index which is not necessarily irreducible. 
\end{abstract}
\maketitle

\section{Introduction}
Given the fact that modern subfactor theory, as pioneered by Vaughan Jones, deals with the relative position of a subfactor inside an ambient factor, it is a very fundamental question to consider relative positions of an intermediate subfactor and this turns out to be a very active area of research. Given a finite-index irreducible subfactor  $N\subset M$ (that is, $N^{\prime}\cap M=\mathbb{C}$) and an intermediate subfactor $Q$, one may explicitly describe (as in \cite{Bakshiintermediate}, see also \cite{BhaLa})  the planar algebra of $N \subset Q$ in terms of the planar algebra of $N\subset M$. The proof in \cite{Bakshiintermediate} is technical which involves lots of beautiful pictorial calculations involving the so-called `biprojections’. We must mention that in the proof we have crucially used the fact that $N\subset M$ is irreducible. However, in the non-irreducible case the description of the planar algebra of $N\subset Q$ has not yet appeared in the literature.  In this paper we consider an intermediate subfactor $N\subset Q\subset M$ of an extremal subfactor $N\subset M$ of finite Jones index which is not necessarily irreducible and  describe the subfactor planar algebra of $N\subset Q$ (which we denote by $P^{N\subset Q}$) in terms of  the subfactor planar algebra $P^{N\subset M}$ in such a way that in the irreducible case, it recovers the description of $P^{N\subset Q}$ as expounded in \cite{Bakshiintermediate}.

We briefly mention related work. In \cite{hartglass17}, Hartglass has constructed an object called an  N-P-M planar algebra which is an algebra over an operad of three-shaded tangles  and shown that if $\mathcal{Q}$ is the standard invariant of $N\subset M$  that contains an intermediate subfactor $P$, then $\mathcal{Q}$ can be faithfully realized inside a natural N-P-M planar algebra  $\mathcal{P}$ associated to the triple $N\subset P \subset M$.
It is also worth mentioning that D. Bisch gave a partial description of the standard invariant of $N \subset Q$ in \cite{Bi94} by giving the standard invariant of the inclusion $N \subset Q_1$, where $Q_1$ is the first step in the basic construction of $N\subset Q$.
Our main result is an explicit description of the  planar algebra of $N\subset Q$ in terms of the original planar algebra. 
We fix an arbitrarily chosen minimal projection $f$ in $Q^{\prime}\cap M$ which gives rise to a projection $f_n$ in $P^{N\subset M}_n$ and a new partially labelled tangle $fTf$ associated to any  tangle $T$. The biprojection $q$ (namely, the Jones projection $e_Q$) corresponding to the intermediate
subfactor $Q$ and the projection $f$ yield naturally a mapping $F = \{F_m\}$ from tangles of any colour (say $m$) to partially labelled
tangles of the same colour and carefully chosen scalar-valued
functions $\alpha_{N\subset Q\subset M}$ and $c$ defined on the collection of all tangles (see Definition \ref{defalpha}), such that $P^{N \subset Q}$ may be identified with a planar algebra, call it $P^\prime$, with $P'_n = range (Z_{F(fI^n_nf)}^{N \subset M})$, where $I^n_n$ is the identity tangle of colour $n$ and
the multilinear map $Z^{\prime}$ associated to a tangle $T:= T^{k_0}_{k_1,\cdots, k_b}$ is given by
\begin{equation*} 
Z^{\prime}_T = (\text{tr}f)^{c(T)}\alpha_{N\subset Q\subset M}(T)Z^{N \subset M}_{F(fTf)},
\end{equation*} with inputs from $P^\prime$. In order to verify that  $(P^{\prime},Z^{\prime}_T)$ is indeed a planar algebra we need to check that  the operation of tangles is compatible with composition of tangles and to this end we  verify the following crucial `co-cycle type' equation holds for any two tangles $T$ and $\tilde{T}$ so that $T\circ \tilde{T}$ makes sense:
\begin{equation}\label{cocycle}
 Z^{N\subset M}_{F(T\circ \tilde{T})}=\frac{\Big((\text{tr}f)^{(c(T)+c(\tilde{T}))}\alpha_{N\subset Q\subset M}(T) \alpha_{N\subset Q \subset M}(\tilde{T})\Big)}{\Big((\text{tr}f)^{c(T\circ \tilde{T})}\alpha_{N\subset Q\subset M}(T\circ \tilde{T})\Big)}
Z^{N\subset M}_{F(T)\circ F(\tilde{T})}.
\end{equation}

We divide the proof of the verification of the Equation \ref{cocycle} into two cases as follows:
\begin{center}
	Case (I): $Q^\prime \cap M= \mathbb{C}$;  and Case (II): $Q^\prime \cap M\neq \mathbb{C}.$
\end{center}

\noindent In view of the fact that in Case (I) we must have $f=1$ it is easy to see that  $(P^{\prime},Z^{\prime}_T)$ coincides with the description of the intermediate planar algebra for the irreducible case as in \cite{Bakshiintermediate}.  
The verification of the  Equation \ref{cocycle}  involves a suitable adaptation of Theorem 3.4 in \cite{Bakshiintermediate} using a cute pictorial relation involving $q$. For the proof of Case (II) we have taken a two-fold strategy. First for the chosen minimal projection $f$  in $Q^\prime \cap M$  we consider the inclusions $Nf\subset Qf\subset fMf$ and then we observe that the description of $P^{Nf\subset Qf}$ reduces to the Case (I). In other words, using Case (I) we may describe $P^{Nf\subset Qf}$  in terms of $P^{Nf\subset fMf}$. As a final strategy, we simply observe that $P^{Nf\subset Qf}$ and $P^{N\subset Q}$ are isomorphic as planar algebras and recall from Corollary 4.2.14 in \cite{Joplanar1} that the planar algebra of the subfactor $Nf\subset fMf$ is isomorphic to the reduced planar algebra $fP^{N\subset M}f$. Therefore, we obtain a description of $P^{Nf\subset Qf}$ (and hence of $P^{N\subset Q})$ in terms of $P^{N\subset M}$. 
\section{Notation and basic facts}
In this paper, all factors we will be considering are of type ${II}_1$  and all subfactors $N\subset M$ will have finite Jones index $[M:N]$. By $\text{tr}_M$ we mean the unique normal faithful trace defined on $M$. $E^M_N$ denotes the trace preserving conditional expectation from $M$ onto $N$; we may omit the subscript $M$ and instead write $E_N$ and tr if it is clear from the context.
We will assume throughout that the reader is familiar with planar algebras introduced by Jones in \cite{Joplanar1}. To fix the notations and definitions for the version
of planar algebras that we use, we refer to \cite{KodSun}. We will briefly describe the notations here for the reader's convenience. Let $P=(P_k)_{k\geq 0}$ be a planar algebra where $P_k$ denotes the $k$-box space $N^\prime \cap M_{k-1}$, $\delta = [M:N]^{-1/2}$ and write $Z_T$ for the multilinear operator corresponding to a planar tangle $T$.
We dispense with shading figures since the shading is uniquely determined by the sub- and superscripts of the tangle.

Now given a planar algebra $P$ and an $f \in P_1$ a projection, we can produce a new planar algebra $fPf$ called the \textit{reduced planar algebra} which we briefly describe now. More details can be found in \cite{Joplanar1}. First define the projections $f_k \in P_k$ as in Figure \ref{fig:projectionfk}. Given a planar tangle $T$, define the partially labelled tangle $fTf$ by inserting $\begin{minipage}{.05\textwidth}
\centering
\psfrag{f}{$f$}
\includegraphics[scale=.45]{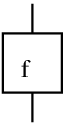}
\end{minipage}
$
in each string of $T$. Now let us describe the planar algebra $fPf$ as follows. First define the spaces of $fPf$ as $(fPf)_k=f_kP_kf_k$ and the tangle action by $Z_T^{fPf}=Z_{fTf}^P$. It is easy to check that $\displaystyle{(fPf,Z^{fPf}_T)}$ is a subfactor planar algebra and we have the following:

\begin{proposition}[\cite{Joplanar1}, Corollary 4.2.14]
	Let $N \subset M$ be an extremal ${II}_1$ subfactor with $[M:N]< \infty$ and $f$ a projection in $N'\cap M$. Then the reduced planar algebra $fP^{N \subset M}f$ is naturally isomorphic to the planar algebra of the reduced subfactor $Nf\subset fMf$.
\end{proposition}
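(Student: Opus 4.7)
The plan is to construct, level by level, a natural isomorphism $\Phi_k : (fP^{N\subset M}f)_k \to P^{Nf\subset fMf}_k$ and then verify that it intertwines the actions of all planar tangles. Writing $N= M_{-1} \subset M_0 = M \subset M_1 \subset \cdots$ for the Jones tower of $N\subset M$ with Jones projections $e_j \in M_j$, my first step is to realise the Jones tower of the reduced subfactor $Nf\subset fMf$ inside the ambient tower, cut down by the projections $f_k$. Using a Pimsner--Popa basis $\{\lambda_i\}$ of $M$ over $N$, one checks directly that $\{(\mathrm{tr}\,f)^{-1/2}\, f\lambda_i f\}$ is a Pimsner--Popa basis of $fMf$ over $Nf$, and iterating this identifies the $k$-th basic construction $(fMf)_{k-1}$ with $f_k M_{k-1} f_k$ as $*$-algebras, with the new Jones projection given by $\tilde e_{k-1}=(\mathrm{tr}\,f)^{-1}\, f_k e_{k-1} f_k$.

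Under this identification, the relative commutant $(Nf)'\cap (fMf)_{k-1}$ equals $f_k (N'\cap M_{k-1}) f_k$, which is precisely $(fP^{N\subset M}f)_k$, since $f_k\in N'\cap M_{k-1}$ forces an element of $f_k M_{k-1} f_k$ to commute with $N$ if and only if it commutes with $Nf$. This defines $\Phi_k$ as the tautological inclusion map of $f_k(N'\cap M_{k-1})f_k$ into $(Nf)'\cap (fMf)_{k-1}$. Next I would match the scalar data: extremality gives $[fMf:Nf]=(\mathrm{tr}\,f)^2 [M:N]$, so the modulus of $P^{Nf\subset fMf}$ is $\mathrm{tr}(f)\,\delta$, which agrees with the value of the closed loop tangle evaluated inside $fP^{N\subset M}f$ (one $f$ on the loop giving a factor of $\mathrm{tr}(f)$, the loop itself giving $\delta$). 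The normalised traces at each level agree by an analogous direct computation, again relying on extremality.

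The last and most delicate step is to verify that $\Phi=(\Phi_k)$ intertwines the tangle action. By definition $Z^{fP^{N\subset M}f}_T = Z^{P^{N\subset M}}_{fTf}$, whereas $Z^{P^{Nf\subset fMf}}_T$ is built from the reduced Jones tower via the standard recipe in terms of Pimsner--Popa bases and the projections $\tilde e_j$. It suffices to verify the intertwining on a generating family of tangles---multiplication, inclusion, conditional expectation, Jones projection, and rotation---after which compatibility for all tangles follows by composability. The main obstacle is the bookkeeping of scalars: each string of $T$ on which an $f$ is inserted contributes a factor of $\mathrm{tr}(f)$, while each occurrence of $\tilde e_j=(\mathrm{tr}\,f)^{-1} f_{j+1} e_j f_{j+1}$ contributes a compensating $\mathrm{tr}(f)^{-1}$, and one must check these balance consistently across every tangle. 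Extremality is exactly what guarantees the spherical/Markov invariance needed for these factors to cancel without leaving residual corrections.
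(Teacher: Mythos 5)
First, a point of comparison: the paper does not prove this proposition at all. It is imported verbatim from \cite{Joplanar1} (Corollary 4.2.14), with the construction of $fPf$ only sketched in Section 2 ("it is easy to check\dots"). So there is no in-paper argument to measure you against; what you outline is the standard route one would take to establish Jones's statement, and its skeleton is correct: realise the tower of $Nf\subset fMf$ as the cut-down tower $f_kM_{k-1}f_k$, identify $(Nf)'\cap(fMf)_{k-1}$ with $f_k(N'\cap M_{k-1})f_k$ (this is exactly the paper's Lemma \ref{minimalprojection} applied with $A=N$, $B=M_{k-1}$, $q=f_k$), recognise the reduced Jones projections in the form $(\mathrm{tr}f)^{-1}f\,e\,f$ (the specialisation $Q=N$ of Lemma \ref{biprojection}), match the loop parameter via $[fMf:Nf]=(\mathrm{tr}f)^2[M:N]$, and then verify the tangle action on a generating family.

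As a proof, however, the proposal has a genuine gap and one concrete error. The gap: the entire content of the proposition is the step you defer, namely that under the tautological identification the tangle action built from the reduced tower coincides with $T\mapsto Z^{P}_{fTf}$ with \emph{no} residual powers of $\mathrm{tr}(f)$; you describe how that check would be organised but do not carry it out, and you should also note that both shadings of the closed loop must be checked --- extremality is precisely what makes the second loop parameter also equal $\mathrm{tr}(f)[M:N]^{1/2}$, i.e.\ what makes $fP^{N\subset M}f$ spherical at all. The error is in your one explicit computation: $\{(\mathrm{tr}f)^{-1/2}f\lambda_if\}$ is not a Pimsner--Popa basis of $fMf$ over $Nf$. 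By extremality $\sum_i\lambda_i f\lambda_i^*=[M:N]\,\mathrm{tr}(f)\,1$, so your family satisfies $\sum_i b_ib_i^*=[M:N]\,f$, whereas a basis must give $[fMf:Nf]\,f=(\mathrm{tr}f)^{2}[M:N]\,f$; equivalently, the requirement $\sum_i b_i\,\tilde{e}\,b_i^*=f_2$ with $\tilde{e}=(\mathrm{tr}f)^{-1}f_2e_Nf_2$ forces the scaling $(\mathrm{tr}f)^{+1/2}f\lambda_if$, not $(\mathrm{tr}f)^{-1/2}f\lambda_if$. The slip does not invalidate the strategy (the tower identification can be obtained without an explicit basis, as in Lemma \ref{biprojection}), but since the scalar bookkeeping is exactly what you yourself identify as the crux, it must be done correctly and in full before this counts as a proof rather than a plan.
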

\begin{figure}[!h]
	\begin{center}
		\psfrag{f}{$f$}
				\psfrag{dots}{$\cdots$}
		\resizebox{6.0cm}{!}{\includegraphics{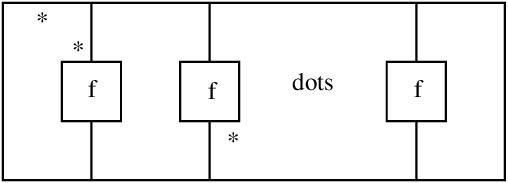}}
	\end{center}
	\caption{The projection $f_k \in P_k$.}
	\label{fig:projectionfk}
\end{figure}

It is well-known from \cite{Bi1,La1,BiJo2} that if $N^\prime \cap M = \mathbb{C}$, there is a bijective correspondence between biprojections $q$ (corresponding to the Jones projection of $L^2(M)$ onto $L^2(Q)$) and the intermediate subfactor $Q$, where  $N \subset Q \subset M$. More precisely, we have the following (reformulation of) Theorem 3.2 of \cite{Bi1}. 
\begin{theorem}[\label{Bisch}\cite{Bi1}, \cite{La1}, \cite{BiJo2}]
	Let $N \subset M$  be an extremal ${\rm II}_1$ subfactor and $P=P^{(N\subset M)}$ be the planar algebra of $N \subset M$. Suppose there exists an intermediate subfactor $Q$, $N \subset Q \subset M$ and $q \in P_2$ denotes the biprojection corresponding to $Q$. Then $q$ satisfies the relations in Figure \ref{fig:biprojection}
	\begin{figure}[!h]\label{fig:biprojection}
\begin{align*}
&\begin{tikzpicture}
\node at (-1,.25) {$(a)$};
\path [fill=white] (.3,-.5) rectangle (.7,1);
\draw (.3,-.5)--(.3,1);
\draw (.7,1)--(.7,-.5);
\draw [fill=white](0,0) rectangle (1,.5);
\node at (0,.25) [left] {$*$};
\node at (.5,.25) {$q$};
\node at (1.1,.25)[right] {$=$};
\path [fill=white] (2.3,-.5) rectangle (2.7,1);
\draw (2.3,-.5)--(2.3,1);
\draw (2.7,1)--(2.7,-.5);
\draw [fill=white] (2,0) rectangle (3,.5);
\node at (2.5,.25) {$q$};
\node at (3,.25)[right]{$*$};
\node at (5,.25) {$(b)$};
\path [fill=white] (6.3,-.5)--(6.3,.6) arc [radius=.2,start angle=180,end angle=0]--(6.7,-.5);
\draw (6.3,-.5)--(6.3,.6);
\draw (6.7,-.5)--(6.7,.6);
\draw (6.7,.6) arc [radius=.2,start angle=0, end angle=180];
\draw [fill=white](6,0) rectangle (7,.5);
\node at (6,.25)[left] {$*$};
\node at (6.5,.25) {$q$};
\node at (7.5,.25) {$=$};
\path [fill=white] (8.3,-.5)--(8.3,0) arc[radius=.2,start angle=180, end angle=0]--(8.7,-.5);
\draw (8.3,0) arc[radius=.2,start angle=180, end angle=0];
\draw (8.3,0)--(8.3,-.5);
\draw (8.7,0)--(8.7,-.5);
\end{tikzpicture}\\
&\begin{tikzpicture}
\node at (-1,.25) {$(c)$};
\path [fill=white](.3,-.5) rectangle (.7,1);
\path [fill=white] (.7,1)--(.7,.6) arc[radius=.3,start angle=180, end angle=0]--(1.3,.6)--(1.6,.6)--(1.6,1);
\path [fill=white] (.7,-.5)--(.7,-.1) arc[radius=.3,start angle=180, end angle=360]--(1.3,.6)--(1.6,.6)--(1.6,-.1)--(1.6,-.5);
\draw (.3,-.5)--(.3,1);
\draw (.7,.6)--(.7,-.1);
\draw (.7,.6) arc[radius=.3, start angle=180, end angle=0];
\draw (1.3,.6)--(1.3,-.1);
\draw (.7,-.1) arc[radius=.3, start angle=180, end angle=360];
\draw [fill=white] (0,0) rectangle (1,.5);
\node at (0,.25)[left] {$*$};
\node at (.5,.25) {$q$};
\node at (1.8,.25) [right] {$=c$};
\path [fill=white](3,1) rectangle (3.5,-.5);
\draw (3,1)--(3,-.5);
\node at (3,.25) [left]{$*$};
\end{tikzpicture}\\
&\begin{tikzpicture}
\node at (-1,.25) {$(d)$};
\path [fill=white] (.3,1)--(.3,-2)--(.7,-2)--(.7,-1.5) arc[radius=.3,start angle=180, end angle=90]--(2.2,-1.2) arc[radius=.3,start angle=90, end angle=0]--(2.5,-2)--(2.8,-2)--(2.8,1)--(2.5,1)--(2.5,-.5) arc[radius=.3,start angle=0,end angle=-90]--(1,-.8) arc[radius=.3,start angle=270,end angle=180]--(.7,1);
\draw (.3,1)--(.3,-2);
\draw (.7,1)--(.7,-.5);
\draw (.7,-.5) arc [radius=.3,start angle=180, end angle=270];
\draw (1,-.8)--(2.2,-.8);
\draw (2.2,-.8) arc[radius=.3,start angle=270, end angle=360];
\draw (2.5,-.5)--(2.5,1);
\draw (2.2,-1.2) arc[radius=.3,start angle=90, end angle=0];
\draw (2.2,-1.2)--(1,-1.2);
\draw (1,-1.2) arc[radius=.3,start angle=90,end angle=180];
\draw (.7,-1.5)--(.7,-2);
\draw (2.5,-1.5)--(2.5,-2);
\draw [fill=white] (0,0) rectangle (1,.5);
\draw [fill=white] (1.5,-.5) rectangle (2,-1.5);
\node at (0,.25)[left] {$*$};
\node at (1.75,-1.5)[below] {$*$};
\node at (.5,.25) {$q$};
\node at (1.75,-1){$q$};
\node at (3.5,-.5){$=$};
\path [fill=white] (4.8,1)--(4.8,-2)--(5.2,-2)--(5.2,-.5) arc[radius=.3,start angle=180,end angle=90]--(6.7,-.2) arc[radius=.3,start angle=90,end angle=0]--(7,-2)--(7.3,-2)--(7.3,1)--(7,1)--(7,.5) arc[radius=.3,start angle=0, end angle=-90]--(5.5,.2) arc[radius=.3,start angle=270,end angle=180]--(5.2,1);
\draw (4.8,-2)--(4.8,1);
\draw (5.2,-2)--(5.2,-.5);
\draw (5.2,-.5) arc[radius=.3, start angle=180, end angle=90];
\draw (5.5,-.2)--(6.7,-.2);
\draw (6.7,-.2) arc[radius=.3,start angle=90, end angle=0];
\draw (7,-.5)--(7,-2);
\draw (7,1)--(7,.5);
\draw (7,.5) arc[radius=.3,start angle=0, end angle=-90];
\draw (6.7,.2)--(5.5,.2);
\draw (5.5,.2) arc[radius=.3,start angle=270, end angle=180];
\draw (5.2,.5)--(5.2,1);
\draw [fill=white] (4.5,-1.5) rectangle (5.5,-1);
\draw [fill=white] (6,.5) rectangle (6.5,-.5);
\node at (4.5,-1.25) [left] {$*$};
\node at (6.25,-.5)[below] {$*$};
\node at (5,-1.25) {$q$};
\node at (6.25,0){$q$};
\end{tikzpicture}
\end{align*}
\caption{Biprojection relations.}

\end{figure}
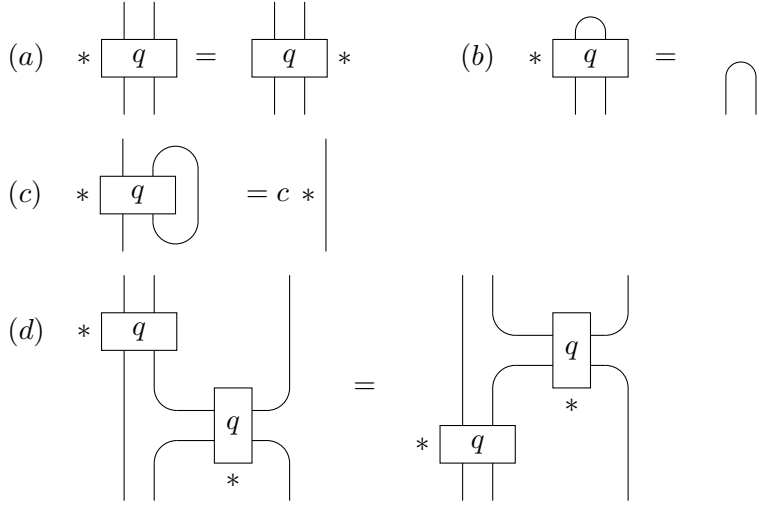
with $c = [M:N]^{1/2}[M:Q ]^{-1}$. Furthermore, in the case $N'\cap M=\mathbb{C}$, the converse is also true. More precisely, $q\in P_2$ satisfying the relations (a)-(d) in Figure \ref{fig:biprojection} implies the 
existence of an intermediate subfactor $Q$, $N \subset Q \subset M$ corresponding to $q$.
\end{theorem}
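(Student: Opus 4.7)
The plan is to verify each of the four pictorial identities (a)--(d) individually for $q = e_Q \in N'\cap M_1 \cong P_2$, by translating each tangle equation into an operator-algebraic statement in the Jones tower $N\subset Q \subset M \subset M_1$ and invoking standard facts about the Jones projection of an intermediate subfactor. The ``furthermore'' clause, asserting the converse in the irreducible case, is a classical result and I would simply cite \cite{Bi1,La1,BiJo2} for it without further work.

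Relation (a) is the self-adjointness of the box labelled $q$, which corresponds to $e_Q^* = e_Q$, and is immediate since $e_Q$ is an orthogonal projection. For (b), unwinding the tangle shows that the left-hand side evaluates to $e_Q\cdot e_1$ (where $e_1$ is the first Jones projection for $N \subset M$) while the right-hand side evaluates to $e_1$; the identity therefore reduces to the operator relation $e_Q e_1 = e_1$, equivalently $e_Q \geq e_1$, which follows immediately from the inclusion of Hilbert spaces $L^2(N) \subset L^2(Q)$ inside $L^2(M)$.

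For (c), the closed loop surrounding $q$ on the left-hand side can be evaluated via the standard planar-calculus dictionary: it produces a scalar multiple of $E^{M_1}_{M}(e_Q)$ times the identity element of $P_1$. Using the well-known identity $E^{M_1}_{M}(e_Q) = [M:Q]^{-1}$ together with the $\delta = [M:N]^{1/2}$ contributed by removing the loop in the planar algebra, one obtains the claimed scalar $c = [M:N]^{1/2}[M:Q]^{-1}$. Relation (d) is the biprojection or ``coproduct'' identity $q \ast q = \mu\, q$ for a suitable scalar $\mu$ that appears symmetrically on both sides of the drawn equation; it is equivalent to the statement that $e_Q$ is a biprojection in Bisch's sense, and follows from the fact that the von Neumann algebra generated by $N \cup \{e_Q\}$ inside $M_1$ is exactly the basic construction of $N \subset Q$, so $e_Q$ is preserved (up to a scalar) by the coproduct corresponding to $q \mapsto q \ast q$.

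The main technical point is the careful bookkeeping of normalization factors --- $\delta$'s and $\delta^{-1}$'s contributed by closed loops, caps, cups, and the normalization convention for the Jones projection --- in translating each tangle to an operator identity. Once that dictionary is in place, each of (a)--(d) reduces to a standard fact about the Jones projection $e_Q$ of the intermediate subfactor $Q$, and the scalar in (c) comes out to precisely $[M:N]^{1/2}[M:Q]^{-1}$ as claimed; the converse in the irreducible case is then invoked verbatim from \cite{Bi1,La1,BiJo2}.
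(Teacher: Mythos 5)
The paper does not actually prove this statement: it is quoted verbatim (as ``a reformulation of Theorem 3.2 of \cite{Bi1}'') and the entire content, both directions, is attributed to \cite{Bi1,La1,BiJo2}. So any attempt to verify the relations goes beyond what the paper does, and your outline of (b) and (c) is essentially the standard dictionary: (b) is $qe_1=e_1$, i.e.\ $e_Q\geq e_N$, coming from $L^2(N)\subset L^2(Q)$, and (c) is the partial trace $\delta\,E_M(e_Q)=[M:N]^{1/2}[M:Q]^{-1}$, which gives the stated constant.

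There are, however, two genuine gaps. First, relation (a) is \emph{not} self-adjointness of $e_Q$. Moving the $*$ from the left side of the box to the right side is the rotation by $\pi$ (two clicks of the rotation tangle), whereas the adjoint in a planar $*$-algebra is implemented by reflection; a projection need not be fixed by the $\pi$-rotation. The correct statement behind (a) is that $e_Q$ is invariant under the two-click rotation, which one proves from $J_M e_Q J_M = e_Q$ (valid because $Q$ is a $*$-subalgebra, so $L^2(Q)$ is $J_M$-invariant) together with extremality of $N\subset M$, which is needed to identify the $\pi$-rotation with conjugation by the modular involution; this is precisely where the extremality hypothesis of the theorem enters, and your argument never uses it. Second, your justification of the exchange relation (d) is circular: you assert it ``is equivalent to the statement that $e_Q$ is a biprojection in Bisch's sense'' and then deduce it from the fact that $\{N, e_Q\}''$ is the basic construction of $N\subset Q$. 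That fact alone does not yield the drawn identity; the standard proof requires the relation $e_Q x e_Q = E_Q(x)e_Q$ for $x\in M$ together with an expansion over a Pimsner--Popa basis (equivalently, the fact that the Fourier transform of $e_Q$ is a scalar multiple of the Jones projection for $Q\subset M$), and also the identification of which scalar $\mu$ appears, none of which is supplied. As written, (a) rests on a misreading of the tangle and (d) on an unproved equivalence, so the verification is incomplete; the safest course, and the one the paper itself takes, is to cite \cite{Bi1,La1,BiJo2} for both directions.
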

Throughout the paper let us fix a ${II}_1$ subfactor $N\subset M$ having an intermediate $Q$ with the corresponding biprojection $q \in P_2$ (i.e $q=e_Q$) where $P=P^{N\subset M}$. Our goal is to determine $P^{N\subset Q}$ in terms of the planar algebra $P^{N\subset M}$.  Now let $f \in Q^\prime \cap M$ be a minimal projection. Consider the projection $f_2$ defined as in the Figure \ref{fig:projectionfk}. Then we have,
\begin{lemma}{\label{biprojection}}
	Let $N\subset Q\subset M$ be an intermediate subfactor with the corresponding biprojection $q$ and $f\in Q^\prime \cap M$ be a minimal projection.  Then $\frac{1}{\text{tr}f}f_2qf_2$ is the biprojection corresponding to the intermediate subfactor $Nf\subset Qf\subset fMf$.
	\end{lemma}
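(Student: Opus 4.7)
The plan is to identify $\frac{1}{\text{tr}f}\, f_2 q f_2$, viewed in the 2-box space $(fPf)_2 = f_2 P_2 f_2$, with the Jones projection $e_{Qf}$ of $Nf\subset Qf\subset fMf$ under the planar-algebra isomorphism $fP^{N\subset M}f \cong P^{Nf\subset fMf}$ of the preceding proposition.

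The first crucial observation is that $E_Q(f) = (\text{tr}f)\cdot 1$. Since $Q$ is a factor we have $Z(Q) = \mathbb{C}$, and $f\in Q'\cap M$ forces, for every $y\in Q$, $yE_Q(f) = E_Q(yf) = E_Q(fy) = E_Q(f)y$; hence $E_Q(f)\in Z(Q) = \mathbb{C}$, and trace preservation fixes the scalar. It follows that $(fe_Q f)^2 = fE_Q(f)e_Q f = (\text{tr}f)\cdot fe_Q f$, so $p := \frac{1}{\text{tr}f}\, fe_Q f$ is a self-adjoint projection. A direct verification on $L^2(fMf)$ shows $p\hat{x} = \frac{1}{\text{tr}f}\widehat{E_Q(x)f}\in L^2(Qf)$ for $x\in fMf$, and for $x = yf\in Qf$ with $y\in Q$ the computation $E_Q(yf) = y\,\text{tr}f$ yields $p\hat{x} = \hat{x}$. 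Hence $p$ is the orthogonal projection onto $L^2(Qf)$ in $L^2(fMf)$, i.e., $p = e_{Qf}$.

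It remains to match $p$ with $\frac{1}{\text{tr}f}\, f_2 q f_2$ through the iso $fPf \cong P^{Nf\subset fMf}$. At the 2-box level, this iso identifies $f_2 P_2 f_2$ with $(Nf)' \cap \langle fMf, e_{Nf}\rangle$ via the natural pictorial rule that dresses every string of a tangle by the projection $f$ (so that 2-boxes of $fPf$ arise as $f_2$-dressings of 2-boxes of $P$). Applying this rule to $q = e_Q$, and using the parallel identification $e_{Nf} = \frac{1}{\text{tr}f}\, fe_N f$ (established in the same way as for $e_{Qf}$) to pin down the overall normalization, one finds $\frac{1}{\text{tr}f}\, f_2 q f_2 \leftrightarrow p = e_{Qf}$, which yields the lemma. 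The main obstacle is precisely this last step: one must unpack the pictorial isomorphism carefully, verifying that $f_2 q f_2$ (rather than $fe_Q f$) is the correct representative in $(fPf)_2$ and that the normalization $\frac{1}{\text{tr}f}$ matches on both sides.
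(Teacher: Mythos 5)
Your first two paragraphs are correct and unproblematic: since $f\in Q'\cap M$, the bimodule property of $E_Q$ forces $E_Q(f)\in Z(Q)=\mathbb{C}$, hence $E_Q(f)=(\mathrm{tr}f)1$, and a direct computation then shows that $\frac{1}{\mathrm{tr}f}fe_Qf$, restricted to $L^2(fMf)$, implements $E_{Qf}$ and so equals $e_{Qf}$. But this is only the easy operator-algebraic half. The actual content of the lemma is the step you yourself label ``the main obstacle'': that under the identification $fP^{N\subset M}f\cong P^{Nf\subset fMf}$ of Corollary 4.2.14 of Jones, the element $\frac{1}{\mathrm{tr}f}f_2qf_2$ of $(fPf)_2=f_2P_2f_2$ (note $f_2=f\tilde f$ with $\tilde f=J_MfJ_M$ the reflected copy of $f$, not merely $f$ itself) is carried to the Jones projection $e_{Qf}$. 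You assert this identification and the matching of the normalization, but do not prove it; as written, the proposal establishes a statement about $fe_Qf$ on $L^2(fMf)$ and leaves unverified precisely the passage from $fe_Qf$ to $f_2qf_2$ inside the 2-box space of the reduced planar algebra. That passage is the lemma, so this is a genuine gap rather than a stylistic difference.

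The paper closes exactly this gap by invoking Lemma 2.2 of Bisch: applying it to the inclusion $Q\subset M$ with the projection $f\in Q'\cap M$ and its mirror image $\tilde f$, one gets that the compressed tower $Qf_2\subset (fMf)\tilde f\subset f_2Q_1f_2$ (where $Q_1$ is the basic construction of $Q\subset M$ and $f\tilde f=\tilde f f=f_2$) is a model of the basic construction of $Qf\subset fMf$ whose Jones projection is $\frac{1}{\mathrm{tr}f}f_2qf_2$; compatibility with the reduced planar algebra isomorphism then yields the lemma. Your argument could be completed along the same lines: observe that $\tilde f$ acts by right multiplication by $f$, so $f_2L^2(M)=\overline{fMf}\cong L^2(fMf)$, that $f_2qf_2$ agrees with $fe_Qf$ on this subspace and vanishes on its complement (which is why $f_2qf_2$, not $fe_Qf$, is the correct representative), and that compression by $f_2$ realizes the basic construction of the reduced inclusion --- but this last fact is exactly Bisch's lemma and must be quoted or proved, not merely gestured at via the string-dressing picture.
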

\begin{proof}
	This essentially follows from Lemma 2.2 from \cite{Bi1}. Indeed in that lemma replace $N$ by $Q$, $p$ by $f$, $q$ by $\tilde{f}=
	$\begin{minipage}{.1\textwidth}
		\centering
		\psfrag{f}{\tiny $f$}
		\includegraphics[scale= .25]{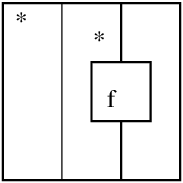}
	\end{minipage} and hence $M_1$ is replaced by $Q_1$ where 
	$Q_1$ is the basic construction of $Q \subset  M$. It is easy to observe that $f\tilde{f}=\tilde{f}f=f_2$.
Then $Qf_2\subset (fMf)\tilde{f}\subset f_2Q_1f_2$ is the basic construction of ($Qf_2\subset (fMf)\tilde{f})\cong(Qf\subset fMf)$ with the corresponding biprojection $\frac{1}{\text{tr} f}f_2qf_2$.
\end{proof}
\begin{lemma}{\label{subfactoriso}}
The subfactor $Nf\subset Qf$ is isomorphic to $N \subset Q$.	
\begin{proof}
	It is easy to see that the map $x \mapsto xf$ is the required isomorphism. We leave the details to the interested reader.
\end{proof}
\end{lemma}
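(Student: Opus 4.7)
The plan is to exhibit the map $\phi: Q \to fMf$ defined by $\phi(x) = xf$ (equivalently, $\phi(x) = fx = fxf$ since $f$ commutes with $Q$) and verify it is a $*$-isomorphism onto $Qf$ that restricts to a $*$-isomorphism $N \to Nf$. First I would check that $\phi$ is a $*$-homomorphism: multiplicativity follows from $(xf)(yf) = x(fy)f = x(yf)f = (xy)f^2 = (xy)f$, which uses $f \in Q^\prime$ and $f^2 = f$; and $(xf)^* = fx^* = x^*f$ shows the $*$-operation is preserved. Surjectivity onto $Qf$ is immediate from the definition of $Qf$.

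Next I would check injectivity. The kernel $\{x \in Q : xf = 0\}$ is a two-sided ideal of $Q$: it is manifestly a left ideal, and for the right-ideal property one uses $f \in Q^\prime$ to write $(xy)f = x(yf) = x(fy) = (xf)y = 0$. Since $Q$ is a factor (hence simple) and $f \neq 0$, this ideal must be trivial, so $\phi$ is a $*$-isomorphism from $Q$ onto $Qf$.

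Since $N \subset Q$ gives $Q^\prime \cap M \subset N^\prime \cap M$, we have $f \in N^\prime \cap M$, so the same argument applies verbatim to $N$ and shows that $\phi$ restricts to a $*$-isomorphism $N \to Nf$. Consequently $\phi$ carries the inclusion $N \subset Q$ onto the inclusion $Nf \subset Qf$, which is the desired isomorphism of subfactors. As an incidental byproduct, $Nf$ and $Qf$ are genuine ${\rm II}_1$ factors, being isomorphic images of $N$ and $Q$. There is no real obstacle here; the content is just the standard fact that cutting a factor by a nonzero projection in its relative commutant produces an isomorphic copy.
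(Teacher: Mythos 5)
Your proposal is correct and uses exactly the map $x \mapsto xf$ that the paper itself names, merely supplying the routine verifications (multiplicativity via $f \in Q'$, injectivity via simplicity of the factor $Q$, and $f \in N' \cap M$ for the restriction to $N$) that the paper leaves to the reader. No discrepancy to report.
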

\begin{lemma}\label{minimalprojection}
	Let $A\subset B$ be an inclusion of von Neumann algebras. Then $q(A^\prime \cap B)q=(qA)^\prime\cap qBq$ for any projection $q \in A^\prime \cap B$.
\end{lemma}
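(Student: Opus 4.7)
The plan is to verify both inclusions by direct algebraic manipulation, relying on two elementary facts: (i) $q$ commutes with every element of $A$ since $q \in A'\cap B$, and (ii) every $y \in qBq$ satisfies $qy = yq = y$.

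For the forward inclusion $q(A'\cap B)q \subseteq (qA)'\cap qBq$, I would start with an arbitrary $x \in A'\cap B$; the element $qxq$ then automatically lies in $qBq$, so it only remains to check that it commutes with every $qa$ for $a \in A$. Expanding both sides and freely pushing $q$ past $a$ via $qa = aq$, while also using $xa = ax$, I expect both $(qxq)(qa)$ and $(qa)(qxq)$ to reduce to the common expression $qaxq$.

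For the reverse inclusion $(qA)'\cap qBq \subseteq q(A'\cap B)q$, the crucial observation is that any $y \in qBq$ which commutes merely with $qA$ must in fact commute with all of $A$. Concretely, for such a $y$ and any $a \in A$, one has
\[
ya = (yq)a = y(qa) = (qa)y = (aq)y = a(qy) = ay,
\]
which uses, in order, $yq = y$, then the commutation relation in $(qA)'$, then $qa = aq$, and finally $qy = y$. Thus $y \in A'\cap B$, and since $y = qyq$ already, it lies in $q(A'\cap B)q$.

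I do not anticipate any genuine obstacle: the lemma is almost purely formal and contains no analytic content. The only thing to be careful about is bookkeeping — tracking precisely which of the two defining commutation relations is invoked at each step of the collapse from $y(qa) = (qa)y$ down to $ya = ay$, and symmetrically for the forward inclusion.
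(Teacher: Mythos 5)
Your argument is correct: the forward inclusion reduces, exactly as you say, to collapsing $(qxq)(qa)$ and $(qa)(qxq)$ to the common element $qaxq$ using $q^2=q$, $qa=aq$, and $xa=ax$; and the reverse inclusion hinges precisely on your observation that $y\in qBq$ commuting with $qA$ forces $ya=ay$ for all $a\in A$, after which $y=qyq$ places it in $q(A'\cap B)q$. Note that the paper itself offers no argument here --- it simply cites this as a well-known fact (Fact 2.2 of \cite{BhaLa}) --- so your elementary computation supplies the verification the paper outsources, and it is exactly the standard one.
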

\begin{proof}
	This is a well known fact (see for instance Fact 2.2 of \cite{BhaLa}).
\end{proof}
Let $E_n$ denote the tangle as in Figure \ref{fig:definingtangle}.
\begin{figure}[!h]
	\begin{center}
		\psfrag{d1}{$D_1$}
		\psfrag{d2}{$D_2$}
		\psfrag{dn}{$D_n$}
		\psfrag{dn+1}{$D_{n+1}$}
		\psfrag{dots}{$\cdots$}
		\resizebox{7.0cm}{!}{\includegraphics{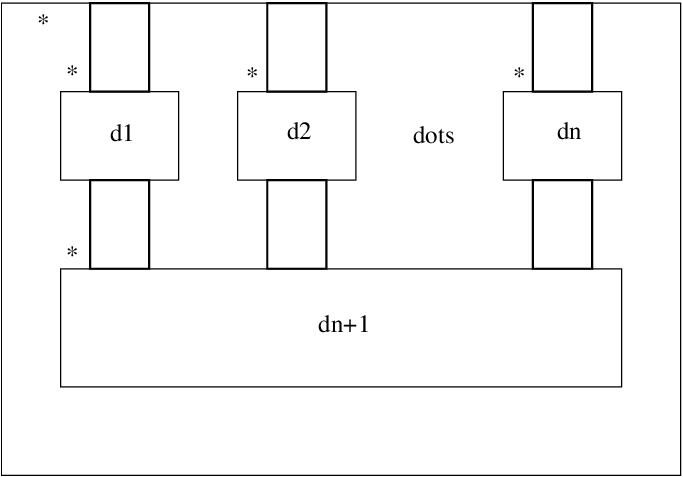}}
	\end{center}
	\caption{The defining tangle $E_n$.}
	\label{fig:definingtangle}
\end{figure}
	For a tangle $T$, define the partially labelled tangle $F_n(T)$ by inserting $\frac{1}{\text{tr}f}f_2qf_2$ in $D_1, \cdots D_n$ and $T$ in $D_{n+1}$ of $E_n$ respectively. That is, 
$$F_n(T)=E_n \circ_{(D_1,\cdots,D_n,D_{n+1})}(\frac{1}{\text{tr}f}f_2qf_2,\cdots,\frac{1}{\text{tr}f}f_2qf_2,T).$$
We write $E$ in place of $E_n$ if it is clear from the context. Now for $x \in P_n$, define $F_n(x)=Z_{F_n(I^n_n)}^P(x)$. Pictorially we have $F_n(x)$ as in Figure \ref{fig:fnx}.
\begin{figure}[!h]
	\begin{center}
		\psfrag{f}{$f$}
		\psfrag{q}{\huge $q$}
	\psfrag{x}{\huge $x$}
	\psfrag{trf}{\huge $\frac{1}{(\text{tr}f)^n}$}
		\psfrag{dots}{$\cdots$}
		\resizebox{7.5cm}{!}{\includegraphics{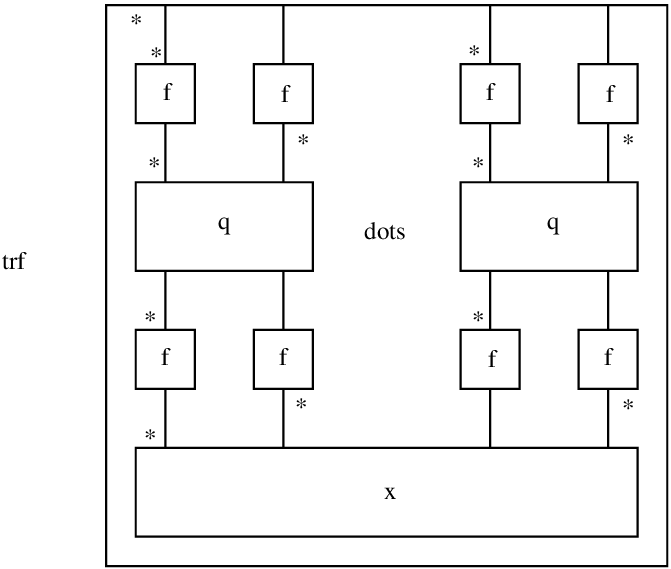}}
	\end{center}
	\caption{The element $F_n(x)$.}
	\label{fig:fnx}
\end{figure}
Thus $F_n$ can be viewed as a map from $P_n$ into $P_n$ given by $F_n(x)=Z_{E_n}(\frac{1}{\text{tr}f}f_2qf_2\otimes \cdots\otimes \frac{1}{\text{tr}f}f_2qf_2\otimes x)$.
\section{Main result}

Throughout this section we fix an extremal finite index subfactor $N\subset M$ having an intermediate subfactor $Q$. The main result of this section is the description of the planar algebra $P^{N\subset Q}$ in terms of $P=P^{N\subset M}$ by explicitly specifying the spaces and tangle action in terms of $P$. We begin by recalling a definition and proposition from \cite{Bakshiintermediate}.



\begin{definition}\label{defalpha}{\normalfont(Definition 3.1, \cite{Bakshiintermediate})}
Let $ T $ be a $k$-tangle with $ b\geq 1 $ internal discs ${D_1,\dots D_b}$ of colours ${k_1,\dots k_b}$. Then define $\alpha_{N\subset Q\subset M}(T) = [M:Q]^{\frac{1}{2}c(T)}$, where
\[c(T)=(\lceil k_0/2 \rceil+\lfloor k_1/2 \rfloor+\dots +\lfloor k_b/2 \rfloor)-l(T) \]
with $l(T)$ being the number of closed loops after capping the black intervals of the external disc  of $T$ and cupping the black intervals of all internal discs of $T$. 
\end{definition}
\begin{proposition}{\normalfont (Proposition 3.2, \cite{Bakshiintermediate})}\label{alphas}
	If $T = T^{k_0}_{k_1,\cdots,k_b}$ and $\tilde{T} = \tilde{T}^{\tilde{k}_0}_{\tilde{k}_1,\cdots,\tilde{k}_{\tilde{b}}}$ are tangles with discs of indicated colours such that $\tilde{k}_0=k_i$ for some $1 \leq i \leq b$, then
	\[\frac{\alpha_{N\subset Q\subset M}(T)\alpha_{N\subset Q\subset M}(\tilde{T})}{\alpha_{N\subset Q\subset M}(T\circ_i\tilde{T})} =  [M:Q]^{\frac{1}{2}(\tilde{k_0} - l(T) - l(\tilde{T}) + l(T\circ_i\tilde{T}))}.\]
\end{proposition}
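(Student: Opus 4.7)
The strategy is a direct computation: unpack $\alpha_{N\subset Q\subset M}(T) = [M:Q]^{c(T)/2}$ on both sides and reduce the identity to an equality of exponents, namely
\[
c(T)+c(\tilde T)-c(T\circ_i\tilde T)\;=\;\tilde k_0-l(T)-l(\tilde T)+l(T\circ_i\tilde T).
\]
So the whole content becomes a bookkeeping statement about the floor/ceiling terms defining $c$, together with the trivial observation that the loop counts appearing in the three $c$-values are exactly $l(T)$, $l(\tilde T)$, and $l(T\circ_i\tilde T)$.

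First I would list the internal and external discs of the composite. When $\tilde T$ is plugged into the $i$-th internal disc of $T$, the internal discs of $T\circ_i\tilde T$ are the discs $D_1,\dots,\widehat{D_i},\dots,D_b$ of $T$ together with all internal discs of $\tilde T$, and the external disc remains the external disc of $T$. Substituting this into the definition of $c$ and subtracting gives
\[
c(T)+c(\tilde T)-c(T\circ_i\tilde T)
=\lceil\tilde k_0/2\rceil+\lfloor k_i/2\rfloor-l(T)-l(\tilde T)+l(T\circ_i\tilde T),
\]
because the term $\lceil k_0/2\rceil$ cancels, each $\lfloor k_j/2\rfloor$ for $j\neq i$ cancels, each $\lfloor\tilde k_j/2\rfloor$ cancels, and the three loop counts survive with the stated signs.

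The final step uses the hypothesis $\tilde k_0=k_i$: one then has $\lceil\tilde k_0/2\rceil+\lfloor k_i/2\rfloor=\lceil k_i/2\rceil+\lfloor k_i/2\rfloor=k_i=\tilde k_0$, which gives exactly the exponent claimed in the proposition. Dividing both sides of $\alpha_{N\subset Q\subset M}(T)\alpha_{N\subset Q\subset M}(\tilde T)$ by $\alpha_{N\subset Q\subset M}(T\circ_i\tilde T)$ then yields the formula.

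There is essentially no obstacle; the only thing to watch is the fact that when a $k_i$-internal disc of $T$ is glued to the $\tilde k_0$-external disc of $\tilde T$, the black/white alternation is consistent, so that \emph{capping the black intervals of the external disc} of $T$ and \emph{cupping the black intervals of the $i$-th internal disc} of $T$ match, respectively, cups and caps of $\tilde T$ after composition. This ensures the three loop counts are the loop counts of the actual tangles after capping/cupping, and no extra parity correction is needed. This is why the assumption that colours match at the composition site is exactly what one needs, and it is the only nontrivial bookkeeping point in the argument.
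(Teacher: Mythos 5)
Your proof is correct and is essentially the same argument as the one in the cited source: unpack $\alpha = [M:Q]^{c/2}$, use the disc structure of $T\circ_i\tilde T$ to cancel the common floor/ceiling terms, and conclude via $\lceil k_i/2\rceil + \lfloor k_i/2\rfloor = k_i = \tilde k_0$. Your closing remark about cap/cup matching is harmless but not actually needed, since $l(T\circ_i\tilde T)$ enters both sides as the same quantity by definition.
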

\bigskip
 Recall that for a planar tangle $T$ and an arbitrarily chosen minimal projection $f \in Q^\prime \cap M$,  $F(T)$ denotes the partially labelled tangle obtained from $T$ by ``surrounding it with $\frac{1}{(\text{tr}f)}f_2qf_2$'' where $f_2$ is the projection given in the Figure \ref{fig:projectionfk} and $fTf$ denotes the partially labelled tangle obtained by inserting $\begin{minipage}{.05\textwidth}
 \centering
 \psfrag{f}{$f$}
 \includegraphics[scale=.45]{projectionf.eps}
 \end{minipage}
 $
 in each string of $T$.
\begin{definition}
	Define $P_n^\prime \subset P_n$ by $P_n^\prime =F_n(f_nP_nf_n)$ and $$Z_T^{P^\prime}=(\text{tr}f)^{c(T)}\alpha_{N\subset Q\subset M}(T)Z^P_{F(fTf)}|_{P^\prime}.$$
	
\end{definition}
\begin{remark}\label{example}
	It is easy to observe that $Z^P_{fF(T)f}=Z^P_{F(fTf)}$ with inputs coming from $P^\prime$. We illustrate this fact with an example and it should be clear that the proof of the general case is similar. 
	Let $T$ be the tangle given in Figure \ref{fig:example}.
	
	\begin{figure}[!h]
		\begin{center}
			\resizebox{5cm}{!}{\includegraphics{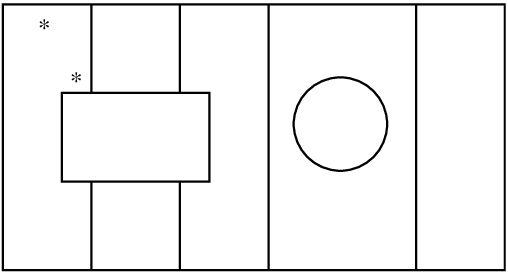}}
		\end{center}
		\caption{}
		\label{fig:example}
	\end{figure}
The tangles $fF(T)f$ and $F(fTf)$ are shown in Figure \ref{fig:examplefFf}. Now the assertion is clear from the fact that $f$ is a projection in $P_1$ and inputs are coming from $P^\prime$.
\begin{figure}[!h]
	\begin{center}
		\psfrag{*}{$*$}
		\psfrag{f}{$f$}
		\psfrag{f2qf2}{$f_2qf_2$}
		\psfrag{S}{\huge $fF(T)f$}
		\psfrag{T}{\huge $F(fTf)$}
		\psfrag{s}{\huge $\frac{1}{(\text{tr}f)^4}$}
		\resizebox{14cm}{!}{\includegraphics{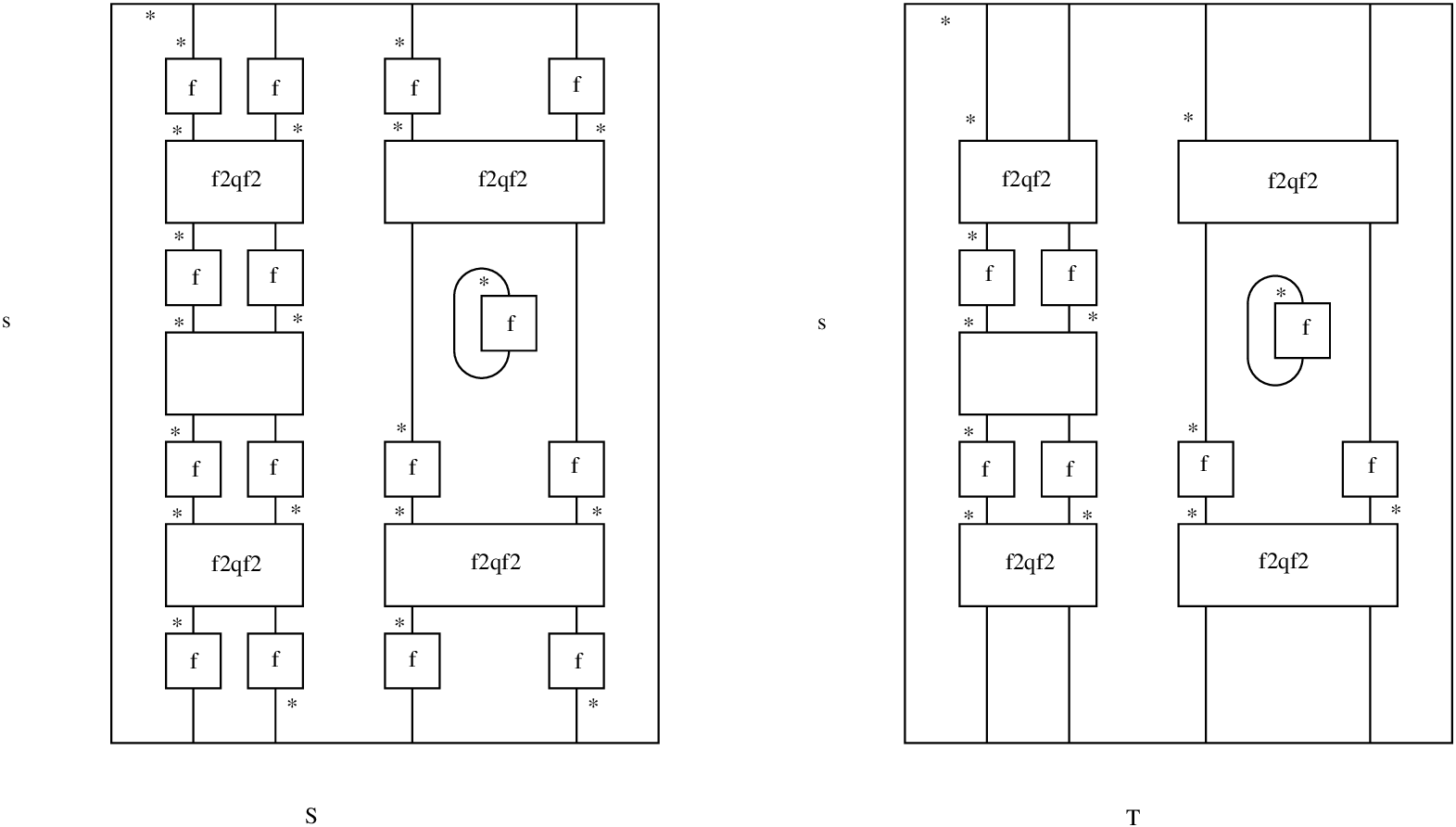}}
	\end{center}
	\caption{The tangles $fF(T)f$ and $F(fTf)$.}
	\label{fig:examplefFf}
\end{figure}
	\end{remark}
Now we prove the main theorem of this article.
\begin{theorem}\label{main1}
Keeping the foregoing notations, 
$(P^\prime, Z_T^{P^\prime}|_{P^\prime})$ is a subfactor planar algebra which is isomorphic to $P^{N\subset Q}$. 	
\end{theorem}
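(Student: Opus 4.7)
The proof naturally splits into two cases, matching the division announced in the introduction.

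Case (I): $Q' \cap M = \mathbb{C}$. Here every minimal projection $f \in Q' \cap M$ is $f = 1$, so $\text{tr}(f) = 1$, $f_n = 1$, $f_n P_n f_n = P_n$, and the definition of $(P', Z^{P'})$ specialises to the irreducible description established in \cite{Bakshiintermediate}. The theorem in this case is therefore Theorem 3.4 of \cite{Bakshiintermediate}, whose proof is a pictorial verification of the cocycle identity (\ref{cocycle}) using the biprojection relations of Figure \ref{fig:biprojection}.

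Case (II): $Q' \cap M \neq \mathbb{C}$. Following the strategy announced in the introduction, I would reduce this case to Case (I) via the reduced inclusion $Nf \subset Qf \subset fMf$. Two observations make this work: extremality of $N \subset M$ is inherited by $Nf \subset fMf$, and by Lemma \ref{minimalprojection} we have $(Qf)' \cap fMf = f(Q' \cap M)f = \mathbb{C}f$ since $f$ is minimal in $Q' \cap M$. Thus $Nf \subset Qf \subset fMf$ falls under Case (I), with associated biprojection $\frac{1}{\text{tr}f}f_2qf_2$ by Lemma \ref{biprojection}. Applying Case (I) inside $P^{Nf \subset fMf}$ describes $P^{Nf \subset Qf}$ explicitly, and I would transport this description through the two natural isomorphisms available: the reduction isomorphism $P^{Nf \subset fMf} \cong fP^{N \subset M}f$ of Corollary 4.2.14 of \cite{Joplanar1} (which converts the ambient tangle $T$ into $fTf$, accounting both for the $fTf$ appearing in the definition of $Z^{P'}_T$ and for the spaces $F_n(f_nP_nf_n)$), and the isomorphism $P^{Nf \subset Qf} \cong P^{N \subset Q}$ coming from Lemma \ref{subfactoriso}. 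Composing these identifications gives $P^{N \subset Q} \cong P'$.

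The principal obstacle is matching the scalar coefficients appearing in the tangle action after transport. Concretely, the Case (I) description of $Z^{P^{Nf \subset Qf}}_T$ carries only the factor $\alpha_{Nf \subset Qf \subset fMf}(T)$, whereas the definition of $Z^{P'}_T$ carries $(\text{tr}f)^{c(T)}\alpha_{N \subset Q \subset M}(T)$, and these must be shown to agree once the loop-value correction coming from interpreting $Z^{fPf}_T = Z^P_{fTf}$ inside $P$ is incorporated: each closed loop in $fPf$ contributes an additional factor of $\text{tr}(f)$ when viewed in $P$. The exponent $c(T)$ of Definition \ref{defalpha} is precisely the bookkeeping of caps/cups and loops needed for this correction to be consistent, and Proposition \ref{alphas} compares $\alpha$ for the original and reduced inclusions under composition of tangles. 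Putting these ingredients together reduces the Case (II) cocycle identity (\ref{cocycle}) to its already-established Case (I) counterpart, and hence $(P', Z^{P'})$ is a subfactor planar algebra isomorphic to $P^{N \subset Q}$.
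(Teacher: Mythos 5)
Your Case (II) follows the paper's strategy faithfully (reduce to $Nf\subset Qf\subset fMf$ via Lemmas \ref{biprojection}, \ref{subfactoriso}, \ref{minimalprojection} and Corollary 4.2.14 of \cite{Joplanar1}), though the scalar matching is cleaner than you suggest: the paper simply computes $\alpha_{Nf\subset Qf\subset fMf}(T)=[fMf:Qf]^{\frac{1}{2}c(T)}=(\text{tr}f)^{c(T)}\alpha_{N\subset Q\subset M}(T)$ from the index of the reduced inclusion, and identifies $Z^P_{fF(T)f}=Z^P_{F(fTf)}$ on $P^\prime$ (Remark \ref{example}), rather than invoking a loop-value correction.

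The genuine gap is in your Case (I). You assert that when $Q^\prime\cap M=\mathbb{C}$ the statement \emph{is} Theorem 3.4 of \cite{Bakshiintermediate}, but that theorem is proved under the hypothesis $N^\prime\cap M=\mathbb{C}$, which is strictly stronger and is \emph{not} available here: in Case (I) the ambient subfactor $N\subset M$ is only extremal and may well be reducible, while only the intermediate $Q$ has trivial relative commutant in $M$. So you cannot cite the irreducible-case theorem; its proof must be re-run, and the whole point of this paper's Case (I) is to locate the one place where irreducibility of $N\subset M$ was used (subcase 4.2(b) of the inductive Temperley--Lieb step in the verification of Equation \ref{mainequation}) and replace it. The replacement uses the pictorial relation of Figure \ref{fig:relation1}, which is equivalent to $Q^\prime\cap M=\mathbb{C}$ (Corollary 3.3 and Lemma 4.2 of \cite{BhaLa}), combined with the exchange relation of Figure \ref{fig:biprojection}(d) to obtain the modified relation of Figure \ref{fig:relation2}; this is what lets one compare $(T,\tilde T)$ with the smaller pair $(S,\tilde S)$ and close the induction. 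Without this adaptation, Case (I) is unsupported, and since your Case (II) rests entirely on Case (I) applied to $Nf\subset Qf\subset fMf$, the whole argument collapses at this point.
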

\begin{proof}
	Our proof strategy is as follows. We divide the proof into two cases (I) $Q^\prime \cap M=\mathbb{C}$ and (II) $Q^\prime \cap M\neq \mathbb{C}$. The proof of the Case (I) is  similar to the proof of the  irreducible case as in \cite{Bakshiintermediate} with appropriate  modifications, and for the proof of Case (II) we will choose a minimal projection $f\in Q^\prime \cap M$ and consider the intermediate $Nf\subset Qf\subset fMf$ which reduces to Case (I), thereby finishing the proof.
	
	\smallskip\noindent
	
	{Case (I):} In this case $f=1$ and hence the operation $F(T)$ for a tangle $T$ is nothing but surrounding $T$ by ``$q$''. So our description of the spaces $P^\prime_n=F_n(P_n)$ and the tangle action $Z_T^{P^\prime}=\alpha_{N\subset Q\subset M}(T)Z^P_{F(T)}|_{P^\prime}$ where $\alpha_{N\subset Q\subset M}(T)=[M:Q]^{\frac{1}{2}c(T)}$ coincides with the description in the irreducible case. 
	Therefore, in view of Proposition \ref{alphas},  the key ingredient in proving that $(P^\prime, Z_T^{P^\prime})$ to be a planar algebra is the verification of the Equation \ref{mainequation} (see \cite{Bakshiintermediate} for details)  for inputs coming from $P^\prime$ where $T = T_{k_1,\cdots,k_b}^{k_0}$ and $\tilde{T} = \tilde{T}_{\tilde{k}_1,\cdots,\tilde{k}_{\tilde{b}}}^{\tilde{k}_0}$ be tangles with $k_i = \tilde{k}_0$:
	\begin{equation}\label{mainequation}
	Z_{F(T) \circ_i F(\tilde{T})} = \tau(q)^{\frac{1}{2}({k}_i + l(T \circ_i \tilde{T}) - l(T) - l(\tilde{T}))} Z_{F(T \circ_i \tilde{T})}.
	\end{equation}
	Observe that the Equation \ref{mainequation} is obtained by putting $f=1$ in the Equation \ref{cocycle}. Following \cite{Bakshiintermediate}(see Theorem 3.4), the proof of the verification of the Equation \ref{mainequation} with inputs from $P^\prime$ is divided into various steps and subcases until the proof is obvious.
	
	\noindent
	{Step 1:} We reduce to the case that $T$ is a $0_+$ tangle. So the equation that must be seen to hold on $P^\prime $ is
	\begin{equation*}
	Z_{T \circ_i F(\tilde{T})} = \tau(q)^{\frac{1}{2}({k}_i + l(T \circ_i \tilde{T}) - l(T) - l(\tilde{T}))} Z_{T \circ_i \tilde{T}}.
	\end{equation*}
	(since $F(T) =T$ for a $0_+$-tangle $T$). See \cite{Bakshiintermediate} for details.
	
	\noindent
	{Step 2:} As in \cite{Bakshiintermediate}, appealing to sphericality we may assume that the tangle $T$ has the form given in Figure \ref{fig:tangleT} where $\hat{T}$ is some tangle of colour $k_1$ and $i=1$.
	
	\noindent
	{Step 3:} It is sufficient to prove the Equation \ref{mainequation} holds for $\tilde{T}$ being a  Temperley-Lieb tangle as explained in \cite{Bakshiintermediate}.
	
	\noindent 	
	{Step 4:} We settle the Temperley-Lieb case by induction on $k_1$ by dividing into two different subcases. In each of the subcases, we will show that the statement for a suitably chosen $S$ and $\tilde{S}$ with $k_0(\tilde{S})<k_0(\tilde{T})$ implies it for $T$ and $\tilde{T}$. The only case where we use the irreducibility of $N\subset M$ is the subcase 4.2(b), see \cite{Bakshiintermediate} for details. We will modify the proof of this subcase accordingly and all other cases remain the same.

	So without loss of generality, we can assume that $\tilde{T}$ is a Temperley-Lieb tangle such that some $2i$ and $2i+1$ are joined and $T$ is a $0_+$ tangle having the form as in Figure \ref{fig:tangleT} with the black intervals $[2i-1,2i]$ and $[2i+1,2i+2]$ are part of the same black region in $\hat{T}$. 
	
	\begin{figure}[!h]
		\begin{center}
			\psfrag{k0}{$k_1$}
			\psfrag{hatt}{\huge $\hat{T}$}
			\psfrag{1}{\huge $1$}
			\resizebox{3.3cm}{!}{\includegraphics{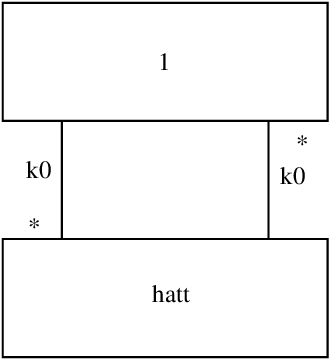}}
		\end{center}
		\caption{The tangle $T$.}
		\label{fig:tangleT}
	\end{figure}
	Now $\tilde{T}$ has the form in Figure \ref{fig:tildet} for some Temperley-Lieb tangle $\tilde{S}$ of colour $k_1-1$. 
	\begin{figure}[!h]
		\begin{center}
			\psfrag{dots}{\huge ${\cdots}$}
			\psfrag{tildeS}{\huge $\tilde{S}$}
			\psfrag{q}{\Huge $q$}
			\psfrag{1}{\large $1$}
			\psfrag{2i-1}{\large $2i-1$}
			\psfrag{2i}{\large $2i$}
			\psfrag{2i+1}{\large $2i+1$}
			\psfrag{2i+2}{\large $2i+2$}
			\psfrag{2k1}{\large $2k_1$}
			\psfrag{=}{\Huge $= \delta \tau(q)$}
			\resizebox{7.5cm}{!}{\includegraphics{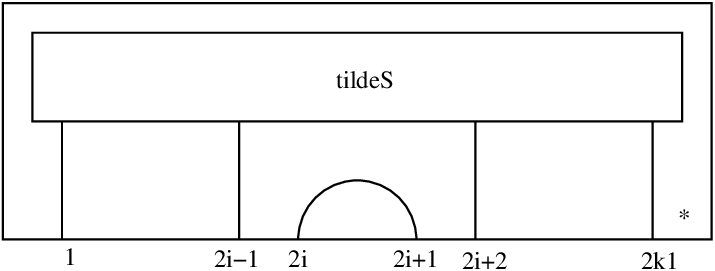}}
		\end{center}
		\caption{Tangle $\tilde{T}$.}
		\label{fig:tildet}
	\end{figure}
	Draw a dotted line from the midpoint of the interval $[2i-1,2i]$ to the midpoint of the interval 
	$[2i+1,2i+2]$ in $\hat{T}$ that lies entirely in the black region that these are both part of. This
	line does not intersect any string of $\hat{T}$ (by the definition of a region) and so the part of $\hat{T}$ that lies inside this dotted line is a 1-box that joins the points $2i$ and $2i+1$. Hence $\hat{T}$ has the form given as in Figure \ref{fig:subcase4.2b} where $W$ is some tangle of colour $k_1-1$.
	\begin{figure}[!h]
		\begin{center}
			\psfrag{dots}{\huge ${\cdots}$}
			\psfrag{1}{\large $1$}
			\psfrag{w}{\huge $W$}
			\psfrag{2i-1}{\large $2i-1$}
			\psfrag{2i}{\large $2i$}
			\psfrag{2i+1}{\large $2i+1$}
			\psfrag{2i+2}{\large $2i+2$}
			\psfrag{2k1}{\large $2k_1$}
			\psfrag{*}{\huge $*$}
			\psfrag{=}{\Huge $= \delta \tau(q)$}
			\resizebox{6.5cm}{!}{\includegraphics{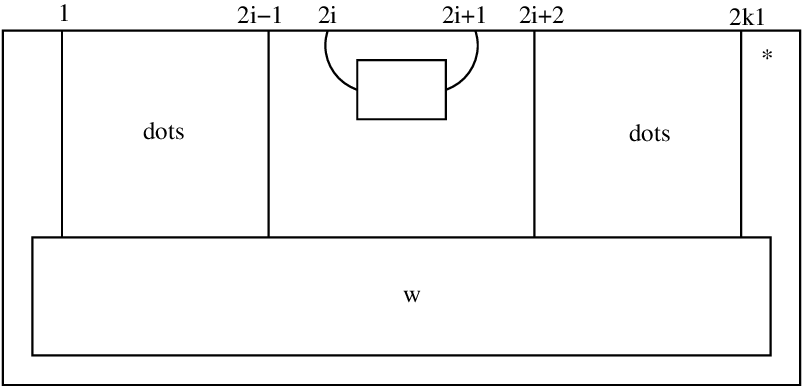}}
		\end{center}
		\caption{Tangle $\hat{T}$.}
		\label{fig:subcase4.2b}
	\end{figure}
	
	Now by assumption we have $Q^\prime \cap M=\mathbb{C}$. This is equivalent to the pictorial relation as in Figure \ref{fig:relation1}. Indeed it follows from Corollary 3.3 and Lemma 4.2 of \cite{BhaLa}.
	\begin{figure}[!h]
		\begin{center}
			\psfrag{q}{\huge $q$}
			\psfrag{x}{\huge $x$}
			\psfrag{tr}{\huge $\tau(q)$}
			\psfrag{*}{\huge$*$}
			\psfrag{dots}{$\cdots$}
			\resizebox{7.8cm}{!}{\includegraphics{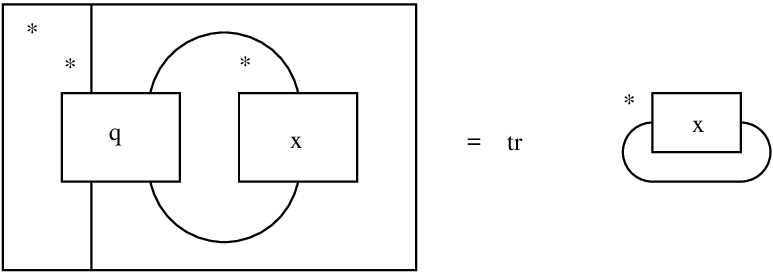}}
		\end{center}
		\caption{Pictorial relation for $Q^\prime \cap M= \mathbb{C}$.}
		\label{fig:relation1}
	\end{figure}
	Furthermore, Figure \ref{fig:relation1} together with the exchange relation in the Figure \ref{fig:biprojection}(d) implies the pictorial relation in Figure \ref{fig:relation2} for $x \in P_1$.
	
	\begin{figure}[!h]
		\begin{center}
			\psfrag{q}{\huge $q$}
			\psfrag{x}{\huge $x$}
			\psfrag{tr}{\huge $\tau(q)$}
			\psfrag{*}{\huge$*$}
			\psfrag{=}{\huge $=$}
			\psfrag{dots}{$\cdots$}
			\resizebox{9.5cm}{!}{\includegraphics{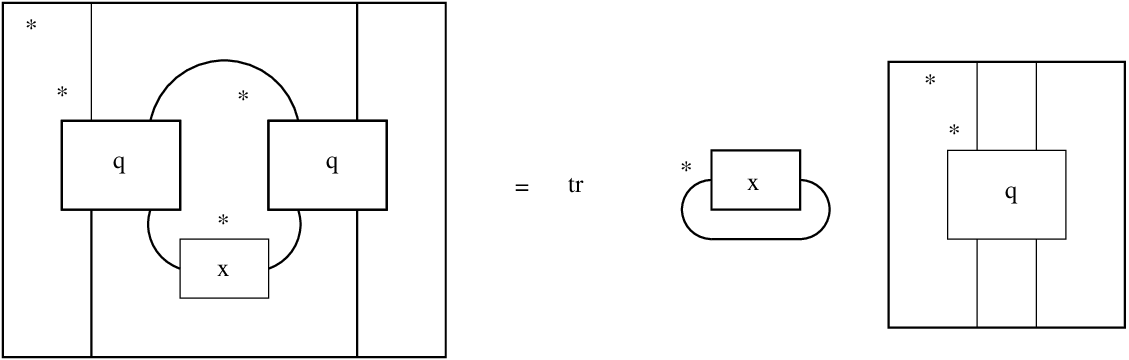}}
		\end{center}
		\caption{Modified relation in the non irreducible case.}
		\label{fig:relation2}
	\end{figure}

	\underline{Claim:} Let $S$ be the tangle given in Figure \ref{fig:stangle}. We claim that the validity of the Equation \ref{mainequation} for the pair $(S,\tilde{S})$ implies the validity for the pair $(T,\tilde{T})$.
	\begin{figure}[!h]
		\begin{center}
			\psfrag{w}{\huge $W$}
			\psfrag{*}{\huge $*$}
			\psfrag{k}{\huge $k_i-1$}
			\psfrag{1}{\huge $1$}
			\resizebox{3.6cm}{!}{\includegraphics{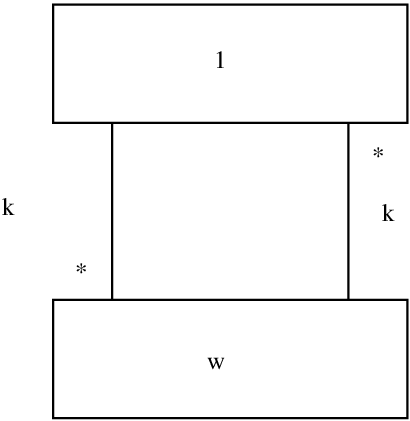}}
		\end{center}
		\caption{Tangle $S$.}\label{fig:stangle}
	\end{figure}

	So suppose that Equation \ref{mainequation} holds for the pair $(S,\tilde{S})$. Then we have
	\begin{equation}\label{main2}
	Z_{S \circ_1 F(\tilde{S})} = \tau(q)^{\frac{1}{2}({k}_1-1 + l(S \circ_1 \tilde{S}) - l(S) - l(\tilde{S}))} Z_{S \circ_1 \tilde{S}}. 
	\end{equation}
The tangles $T\circ_1\tilde{T}$ and $S\circ_1\tilde{S}$ are shown in Figure \ref{fig:tcomposettilde}.
\begin{figure}[!h]
	\begin{center}
		\psfrag{w}{\huge $W$}
		\psfrag{*}{\huge $*$}
		\psfrag{dots}{\huge $\cdots$}
		\psfrag{k-1}{\huge $k_1-1$}
		\psfrag{tildes}{\huge $\tilde{S}$}
		\psfrag{T}{\huge $T\circ_1\tilde{T}$}
		\psfrag{S}{\huge $S\circ_1\tilde{S}$}
				\resizebox{11cm}{!}{\includegraphics{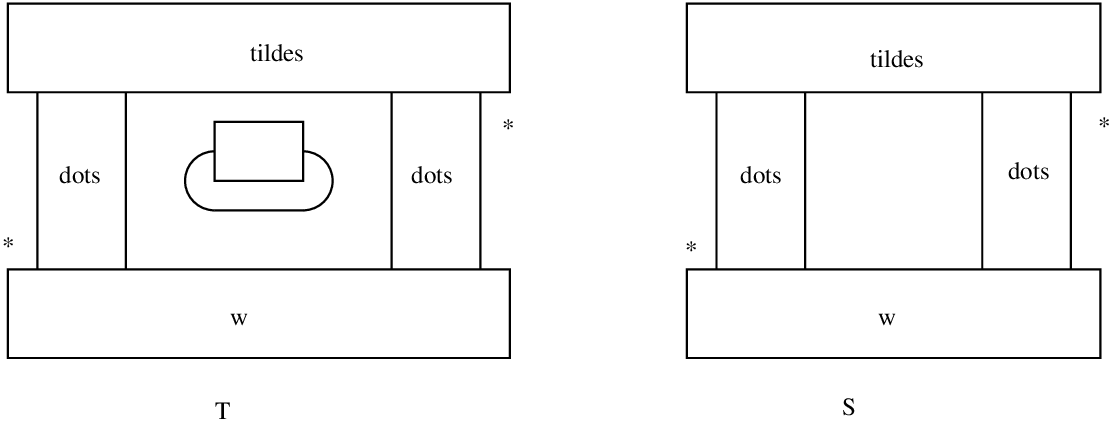}}
				\caption{The tangles $T\circ_1\tilde{T}$ and $S\circ_1\tilde{S}$. }
				\label{fig:tcomposettilde}
	\end{center}
\end{figure}
	It can be easily seen that $T\circ_1 \tilde{T}$ has an extra floating one box than $S \circ_1 \tilde{S}$. Hence $Z_{T\circ_1 \tilde{T}}=$ $\begin{minipage}{.05\textwidth}
	\centering
	\psfrag{x}{$x$}
	\includegraphics[scale=.45]{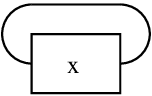}
	\end{minipage}
	\qquad Z_{S \circ_1 \tilde{S}}$ for $x \in P_1$ and $l(T\circ_1 \tilde{T})=l(S \circ_1 \tilde{S})+1$.
	Also we have,
	$l(T)=l(\hat{T})=l(W)=l(S)$ and $l(\tilde{T})=l(\tilde{S})$.
	Now it remains to compare $T\circ_1F(\tilde{T})$ and $S\circ_1F(\tilde{S})$ given in Figure \ref{fig:tftilde}.
	\begin{figure}[!h]
		\begin{center}
			\psfrag{w}{\huge $W$}
			\psfrag{dots}{\huge $\cdots$}
			\psfrag{T}{\huge $T\circ_1F(\tilde{T})$}
		\psfrag{S}{\huge $S\circ_1F(\tilde{S})$}
			\psfrag{q}{\huge $q$}
			\psfrag{tildeS}{\huge $\tilde{S}$}
			\resizebox{13cm}{!}{\includegraphics{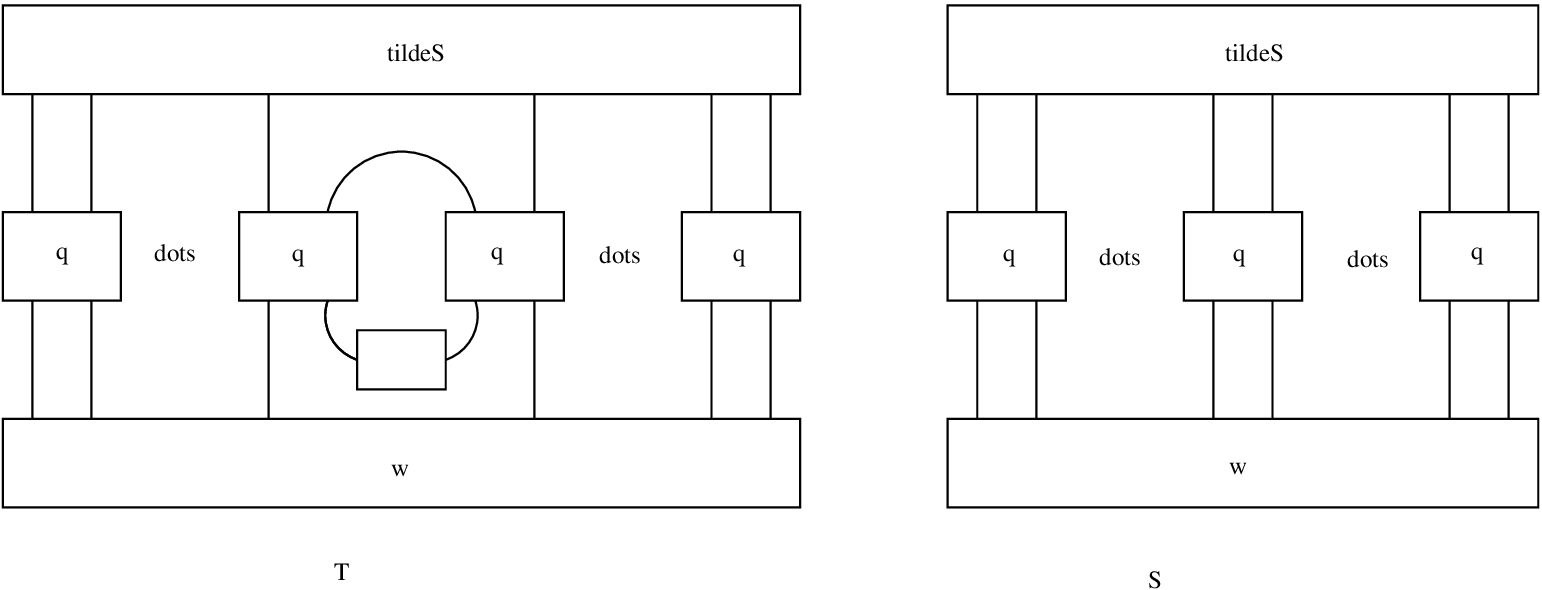}}
		\end{center}
		\caption{The tangles $T\circ_1F(\tilde{T})$ and $S\circ_1F(\tilde{S})$.}\label{fig:tftilde}
	\end{figure}
	Applying the relation in Figure \ref{fig:relation2} we have
	$
	Z_{T\circ_1F(\tilde{T})}=\tau(q)~\begin{minipage}{.05\textwidth}
	\centering
	\psfrag{x}{$x$}
	\includegraphics[scale=.45]{tracex.eps}
	\end{minipage}
	\qquad Z_{S\circ_1F(\tilde{S})}$
	for $x \in P_1$ and this together with the Equation \ref{main2} justifies the claim.

Combining all the steps we may conclude that $(P^{\prime}, Z_{T^{\prime}})$ is a planar algebra. To finish the proof of Case (I) we observe that the planar algebra $(P^{\prime}, Z_{T^{\prime}})$ is isomorphic to the planar algebra $P^{N\subset Q}$ by Theorem 3.7 of \cite{Bakshiintermediate}.

	Case (II): $Q^\prime \cap M\neq \mathbb{C}$. \color{black}Let $f\in Q^\prime \cap M$ be a minimal projection. Consider the intermediate $Nf\subset Qf\subset fMf$ with corresponding biprojection $\frac{1}{\text{tr}f}f_2qf_2$, see Lemma \ref{biprojection}. Observe that 
	\begin{eqnarray*}
		\alpha_{Nf\subset Qf\subset fMf}(T)=[fMf:Qf]^{\frac{1}{2}c(T)}&=&
		(\text{tr}f)^{c(T)}[M:Q]^{\frac{1}{2}c(T)}\\&=&(\text{tr}f)^{c(T)}\alpha_{N\subset Q\subset M}(T).
	\end{eqnarray*}
	Now since $f$ is minimal, by Lemma \ref{minimalprojection}, $(Qf)^\prime \cap fMf=f(Q^\prime \cap M)f=\mathbb{C}$. So this reduces to Case (I). By Lemma \ref{subfactoriso}, we have the subfactor $Nf\subset Qf$ isomorphic to $N\subset Q$. 
	Hence we have,
	\begin{eqnarray*}  
		P^{N\subset Q}_n\cong P^{Nf\subset Qf}_n&=&F_n(P^{Nf\subset fMf}_n)\qquad (\text{by Case (I)})\\
		&=& F_n((fPf)_n)\qquad \quad(\text{reduced planar algebra, $\S 2$})\\
		&=& F_n(f_nP_nf_n)\\
		&=& P_n^\prime
	\end{eqnarray*}
	and
	\begin{eqnarray*} 
		Z_T^P\cong Z_T^{P^{Nf\subset Qf}}&=&\alpha_{\scaleto{Nf\subset Qf\subset fMf}{5pt}}(T) Z^{P^{Nf\subset fMf}}_{F(T)}|_{P_n^{Nf\subset Qf}}\quad (\text{by Case (I)})\\
		&=& (\text{tr}f)^{c(T)}\alpha_{N\subset Q\subset M}(T)Z^P_{fF(T)f|{P^\prime}}\quad(\text{reduced planar algebra, $\S 2$})\\
		&=&(\text{tr}f)^{c(T)}\alpha_{N\subset Q\subset M}(T)Z^P_{F(fTf)|{P^\prime}}\quad (\text{follows from Remark \ref{example}\color{black}})\\
		&=& Z_T^{P^\prime}.	
	\end{eqnarray*}
This completes the proof of the theorem.
\end{proof}
\begin{remark}
	The planar algebra of $Q\subset M$ may also be described in terms of planar algebra of $N\subset M$ by considering the `dual planar algebra' $P^{M\subset M_1}$ (see \cite{Joplanar1}) since the type $II_1$ factor $Q_1$ is an intermediate subfactor of $M\subset M_1$. The details are easy and omitted. \end{remark}
\section{acknowledgement}
The authors would like to thank Prof. Vijay Kodiyalam for various useful discussions. The first named author was supported through DST INSPIRE Faculty grant (reference no. DST/INSPIRE/04/2019/002754). The authors would like to thank the referee for pointing out the related references \cite{hartglass17} and \cite{Bi94}.
\color{black}
\nocite{*}
\bibliographystyle{plain}
\bibliography{intermediate}
\end{document}